\documentclass[10pt]{article}

\input xy
\xyoption{all}
\usepackage{amsmath,hyperref,amssymb,amsthm,mathrsfs}

% *************************************************************
\newtheorem{thm}{Theorem}[section]
\newtheorem{cor}[thm]{Corollary}
\newtheorem{lem}[thm]{Lemma}
\newtheorem{prop}[thm]{Proposition}

% *************************************************************
\theoremstyle{remark}

% % % % % % % % % % % % % % % %

\def\Stein{{\mathrm{e}}}

\def\Im{\mathrm{Im}}
\def\reg{\widetilde{\rho}}

\def\D{\mathbb{D}}

\def\Ext{\mathrm{Ext}}
\def\Hom{\mathrm{Hom}}

\def\F{\mathbb F}

\def\Z{\mathbb Z}
\def\N{\mathbb N}
\def\A{\mathscr A}
\def\U{\mathscr U}

\def\T{\mathrm{T}}

% % % % % % % % % % % % % % % % % % % % % % % % % % % %

\begin{document}

\title{Takayasu cofibrations revisited}
\author{Nguyen D.H. Hai and Lionel Schwartz}

%\address{Department of Mathematics, College of Sciences, University of Hue, Vietnam}

%\email{nguyendanghohai@husc.edu.vn}

\maketitle

%\begin{abstract}
%
%\end{abstract}

%\tableofcontents

\section{Introduction}

Given a natural number $n$, let $\reg_n$ be the reduced real regular representation of the elementary abelian $2$-group $V_n:=(\Z/2)^n$. Let $BV_n^{k\reg_n}$, $k\in \N$, denote the Thom space over the classifying space $BV_n$ associated to the direct sum of $k$ copies of the representation $\reg_n$. Following S. Takayasu \cite{Tak99}, let $M(n)_k$ denote the stable summand of $BV_n^{k\reg_n}$ which corresponds to the Steinberg module of the general linear group $GL_n(\F_2)$ \cite{MP83}.

Takayasu constructed in \cite{Tak99} a cofibration of the following form: 
$$\Sigma^kM(n-1)_{2k+1}\to M(n)_k\to M(n)_{k+1}.$$
This generalised the splitting of Mitchell and Priddy $M(n)\simeq L(n)\vee L(n-1)$, 
where $M(n)=M(n)_0$ and $L(n)=M(n)_1$ \cite{MP83}. Takayasu also considered the spectra $M(n)_k$ associated to the virtual representations $k\reg_n$, $k<0$, and proved that the above cofibrations are still valid for these spectra. Here and below, all spectra are implicitly completed at the prime two. 

Note that the spectra $M(n)_k$, $k\ge 0$, are used in the description of layers of the Goodwillie tower of the identity functor evaluated at spheres \cite{AM99, AD01}, and the above cofibrations can also be deduced by combining Goodwillie calculus with the James fibration,  as described by M. Behrens in \cite[Chapter 2]{Behrens}.

The purpose of this note is to give another proof for the existence of the above cofibrations for the cases $k\in \N$. 
This will be carried out by employing techniques in the category of unstable modules over the Steenrod algebra \cite{Sch94}. Especially, the action of Lannes' $\T$-functor on the Steinberg unstable modules (see \S \ref{vanish}), will play a crucial role in studying the vanishing of some extension groups of modules over the Steenrod algebra.

\section{Algebraic short exact sequences}
In this section, we recall the linear structure of the mod $2$ cohomology of $M(n,k)$ and the short exact sequences relating these $\A$-modules.
 Recall that the general linear group $GL_n:=GL_n(\F_2)$ acts on $H^*V_n\cong\F_2[x_1,\ldots,x_n]$ by the rule:
$$(gF)(x_1,\cdots,x_n):=F(\sum_{i=1}^ng_{i,1}x_i,\cdots,\sum_{i=1}^ng_{i,n}x_i),$$
where $g=(g_{i,j})\in GL_n$ and $F(x_1,\cdots,x_n)\in \F_2[x_1,\ldots,x_n]$. This action commutes with the action of the Steenrod action on $\F_2[x_1,\ldots,x_n]$.

By definition, the Thom class of the vector bundle associated to the reduced regular representation $\reg_n$ is given by the top Dickson invariant:
$$\omega_n=\omega_n(x_1,\ldots,x_n):=\prod_{0\not =x\in \F_2\langle x_1,\ldots,x_n\rangle}x.$$
Recall also that the Steinberg idempotent $\Stein_n$ of $\F_2[GL_n]$ is given by 
$$\Stein_n:=\sum_{b\in B, \sigma\in \Sigma_n}b\sigma,$$
where $B_n$ is the subgroup of upper triangular matrices in 
$GL_n$ and $\Sigma_n$ the subgroup of permutation matrices. 

Let $M_{n,k}$ denote the mod $2$ cohomology of the spectrum $M(n,k)$. By Thom isomorphism, we have an isomorphism of $\A$-modules:
$$M_{n,k}\cong \Im [\omega_n^k H^*BV_n\xrightarrow{\Stein_n}\omega_n^k H^*BV_n].$$
We note that $M_{n,k}$ is invariant under the action of the group $B_n$. 

\begin{prop}[\cite{HaiJA13}]
A basis for the graded vector space $M_{n,k}$ is given by 
$$\{\Stein_n(\omega_1^{i_1-2i_2}\cdots \omega_{n-1}^{i_{n-1}-2i_n}\omega_n^{i_n}) \mid \text{$i_j> 2i_{j+1}$ for $1\le j\le n-1$ and $i_n\ge k$}\}.$$
\end{prop}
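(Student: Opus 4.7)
The plan is to reduce the statement to the $k = 0$ case, which is the Mitchell--Priddy basis theorem for $M(n) = M_{n,0}$ \cite{MP83}. The key point is that the top Dickson invariant $\omega_n$ is $GL_n$-invariant, hence commutes with the Steinberg idempotent: for any $f \in H^*BV_n$,
$$\Stein_n(\omega_n^k f) = \sum_{b \in B_n,\, \sigma \in \Sigma_n} b\sigma(\omega_n^k f) = \omega_n^k \sum_{b,\sigma} b\sigma(f) = \omega_n^k \Stein_n(f).$$
Combined with the injectivity of multiplication by $\omega_n$ on the polynomial ring $H^*BV_n$, this yields the vector-space identification
$$M_{n,k} = \Stein_n(\omega_n^k H^*BV_n) = \omega_n^k \cdot \Stein_n(H^*BV_n) = \omega_n^k \cdot M_{n,0},$$
so a basis for $M_{n,k}$ is obtained by multiplying a basis for $M_{n,0}$ by $\omega_n^k$.

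For $k = 0$, Mitchell and Priddy established the basis $\{\Stein_n(\omega_1^{a_1 - 2a_2}\cdots \omega_{n-1}^{a_{n-1} - 2a_n}\omega_n^{a_n}) : a_j > 2a_{j+1}\text{ for }j<n,\ a_n \geq 0\}$. Multiplying such an element by $\omega_n^k$ absorbs into the last factor, raising $\omega_n^{a_n}$ to $\omega_n^{a_n + k}$. Setting $i_s := a_s + 2^{n-s} k$ for $s = 1, \dots, n$, one computes $i_s - 2i_{s+1} = a_s - 2a_{s+1}$ for $s < n$ and $i_n = a_n + k$, so the basis takes the form $\{\Stein_n(\omega_1^{i_1 - 2i_2}\cdots \omega_{n-1}^{i_{n-1} - 2i_n}\omega_n^{i_n})\}$ with $i_s > 2i_{s+1}$ for $s < n$ and $i_n \geq k$, matching the statement. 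One should also note that $i_n \geq k$ ensures the element actually lies in $\omega_n^k H^*BV_n$, as required.

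The main obstacle is therefore the Mitchell--Priddy theorem itself. One standard proof proceeds by a leading-term argument in lex order: the element $\omega_1^{a_1}\omega_2^{a_2}\cdots\omega_n^{a_n}$ has leading monomial $x_1^{j_1} x_2^{j_2} \cdots x_n^{j_n}$ with $j_s = \sum_{t \geq s} 2^{t-s} a_t$, and the condition $a_s > 0$ translates exactly into $j_s > 2j_{s+1}$. The Steinberg idempotent preserves these leading monomials, so distinct admissible exponent sequences yield linearly independent elements. Spanning then follows either from a dimension count against the known Poincaré series of $M(n)$, or by inductively reducing a general $\Stein_n$-image modulo strictly smaller leading terms to an admissible combination.
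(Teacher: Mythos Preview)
Your reduction is correct: $\omega_n$ is $GL_n$-invariant, so $\Stein_n(\omega_n^k f)=\omega_n^k\,\Stein_n(f)$, multiplication by $\omega_n^k$ is injective on the integral domain $H^*BV_n$, and hence $M_{n,k}=\omega_n^k\cdot M_{n,0}$ as graded vector spaces; your reindexing $i_s=a_s+2^{n-s}k$ then gives the claimed bijection of index sets. There is nothing to compare with, however, because the paper does not prove this proposition at all---it is simply quoted from \cite{HaiJA13}.

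Two small remarks on the final paragraph. First, in your leading-term sketch the notation drifts: the exponent on $\omega_s$ in the basis element is $a_s-2a_{s+1}$, not $a_s$, and with that convention the leading monomial of $\omega_1^{a_1-2a_2}\cdots\omega_n^{a_n}$ in lex order is exactly $x_1^{a_1}\cdots x_n^{a_n}$, so the condition $a_s>2a_{s+1}$ is read off directly from the leading exponents rather than via an auxiliary $j_s$. Second, the step ``the Steinberg idempotent preserves these leading monomials'' is the real content of the independence argument and deserves a sentence: one checks that for $b\in B_n$ and $\sigma\in\Sigma_n$ the element $b\sigma\cdot(\omega_1^{b_1}\cdots\omega_n^{b_n})$ has lex-leading term no larger than the original, with equality only for $b\sigma=\mathrm{id}$. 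The attribution of this Dickson-invariant basis for $M_{n,0}$ specifically to \cite{MP83} is also a bit loose; the formulation in terms of the $\omega_i$ with the triangularity argument is made explicit in later work (and is one of the points of \cite{HaiJA13}).
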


\begin{thm}[cf. \cite{Tak99}]
Let $\alpha: M_{n,k+1}\rightarrow M_{n,k}$ be the natural inclusion and let $\beta : M_{n,k}\rightarrow\Sigma^k M_{n-1,2k+1}$ be the map given by 
$$\beta(\omega_{i_1,\cdots,i_{n}})=
\begin{cases}
0, & i_{n}>k,\\
\Sigma^{k}\omega_{i_1,\cdots,i_{n-1}}, & i_{n}=k.
\end{cases}
$$
Then 
$$0\to M_{n,k+1}\xrightarrow{\alpha} M_{n,k}\xrightarrow{\beta} \Sigma^k M_{n-1,2k+1}\to 0$$
is a short exact sequence of $\A$-modules:
\end{thm}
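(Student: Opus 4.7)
The plan is to verify exactness at the graded vector space level using the basis from the preceding proposition, and then separately check $\A$-linearity of $\alpha$ and $\beta$. The basis of $M_{n,k}$ partitions according to $i_n$: those with $i_n \ge k+1$ form a basis of $M_{n,k+1}=\Im\alpha$, and those with $i_n = k$ (which forces $i_{n-1} \ge 2k+1$) are sent by the stated formula for $\beta$ bijectively onto the basis $\{\omega_{i_1,\ldots,i_{n-1}} : i_{n-1} \ge 2k+1\}$ of $\Sigma^k M_{n-1,2k+1}$. Vector-space exactness is thus immediate.

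The map $\alpha$ is manifestly $\A$-linear. Since $\omega_n$ is $GL_n$-invariant, the Steinberg projector commutes with multiplication by $\omega_n^k$, giving $M_{n,k}=\omega_n^k\,\Stein_n(H^*BV_n)$ and $M_{n,k+1}=\omega_n\,M_{n,k}$; both are nested $\A$-submodules of $H^*BV_n$ since the ideal $\omega_n^j H^*BV_n$ is $\A$-stable (a standard consequence of the Steenrod action on powers of the top Dickson invariant) and $\Stein_n$ commutes with $\A$.

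For $\beta$, the plan is to realise it as extraction of the coefficient of $x_n^k$. Viewing $H^*BV_n = H^*BV_{n-1}[x_n]$, the Dickson--Mui formula gives $\omega_n \equiv \omega_{n-1}^2\, x_n \pmod{x_n^2}$, hence $\omega_n^k \equiv \omega_{n-1}^{2k}\, x_n^k \pmod{x_n^{k+1}}$. Define
$$\varphi\colon \omega_n^k H^*BV_n \longrightarrow \Sigma^k H^*BV_{n-1}, \qquad \varphi(\omega_n^k P) = \Sigma^k\,\omega_{n-1}^{2k}\,P|_{x_n=0};$$
equivalently, $\varphi(Y)$ is $\Sigma^k$ times the coefficient of $x_n^k$ in $Y$. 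This is $\A$-linear: writing $Y = \sum_{l \ge k} Y_l\,x_n^l$ with $Y_l \in H^*BV_{n-1}$, Cartan gives $Sq^i(Y_l\, x_n^l) = \sum_{a+b=i}\binom{l}{b}\,Sq^a(Y_l)\,x_n^{l+b}$, so contributions to the $x_n^k$-coefficient of $Sq^i\,Y$ require $l+b=k$ with $l\ge k$, forcing $l=k$ and $b=0$ and yielding precisely $Sq^i(Y_k)$.

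The main obstacle is to identify $\varphi|_{M_{n,k}}$ with $\beta$. Rewriting a basis element with $i_n = k$ as $\Stein_n(\omega_1^{i_1-2i_2}\cdots\omega_{n-1}^{i_{n-1}-2k}\omega_n^k) = \omega_n^k\,\Stein_n(\omega_1^{i_1-2i_2}\cdots\omega_{n-1}^{i_{n-1}-2k})$, the identification reduces to the compatibility lemma
$$\Stein_n(\omega_1^{a_1}\cdots\omega_{n-1}^{a_{n-1}})\big|_{x_n=0} = \Stein_{n-1}(\omega_1^{a_1}\cdots\omega_{n-1}^{a_{n-1}}),$$
valid under the admissibility condition $a_j \ge 1$ for $1 \le j \le n-1$ (automatic in the basis since $a_j = i_j - 2i_{j+1} \ge 1$). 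Granting this, $\varphi$ sends the basis element to $\Sigma^k\omega_{n-1}^{2k}\,\Stein_{n-1}(\omega_1^{i_1-2i_2}\cdots\omega_{n-1}^{i_{n-1}-2k}) = \Sigma^k\,\Stein_{n-1}(\omega_1^{i_1-2i_2}\cdots\omega_{n-1}^{i_{n-1}}) = \Sigma^k\,\omega_{i_1,\ldots,i_{n-1}}$, matching $\beta$; basis elements with $i_n > k$ lie in $\omega_n^{k+1}H^*BV_n \subseteq x_n^{k+1}H^*BV_n$ and are killed by $\varphi$. To prove the lemma I would decompose $\Sigma_n$ into $\Sigma_{n-1}$-cosets indexed by $m = \pi^{-1}(n)$, and use the semidirect-product structure $B_n \cong B_{n-1}\ltimes \F_2^{n-1}$ on the last column to evaluate each coset's contribution: cosets with $m=n$ factor through a free sum over $\F_2^{n-1}$ whose order $2^{n-1}$ vanishes mod $2$ for $n\ge 2$, while the cosets with $m\ne n$ combine---crucially using the factor $\omega_{n-1}^{a_{n-1}}$ with $a_{n-1}\ge 1$ to prevent further cancellation---to reproduce $\Stein_{n-1}(P)$ after restriction at $x_n=0$. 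This careful bookkeeping is the key technical step.
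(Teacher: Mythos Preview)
Your approach is correct and is essentially the same as the paper's: both realise $\beta$ as extraction of the $x_n^k$-coefficient (manifestly $\A$-linear via Cartan) and then identify this with the stated formula by controlling the leading $x_n$-term of $\omega_{i_1,\ldots,i_n}$. The paper does not prove this last step but simply invokes the cited lemma from \cite{NST09b}, namely $\omega_{i_1,\ldots,i_n}=\omega_{i_1,\ldots,i_{n-1}}x_n^{i_n}+\text{(higher powers of }x_n\text{)}$, which is exactly equivalent to your ``compatibility lemma'' $\Stein_n(\omega_1^{a_1}\cdots\omega_{n-1}^{a_{n-1}})|_{x_n=0}=\Stein_{n-1}(\omega_1^{a_1}\cdots\omega_{n-1}^{a_{n-1}})$ after pulling out $\omega_n^{i_n}$.

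The one genuine difference is that you go further and sketch a direct proof of this lemma by decomposing $\Stein_n$ over $\Sigma_{n-1}$-cosets and the unipotent radical $U\cong\F_2^{n-1}$ of $B_n$. Your observation that the $\sigma\in\Sigma_{n-1}$ terms cancel (since $U$ fixes any polynomial not involving $x_n$, giving a factor $|U|=2^{n-1}\equiv 0$) is correct; the remaining claim that the other cosets reassemble to $\Stein_{n-1}(P)$ after setting $x_n=0$ is the heart of the matter and, while plausible and easily checked for $n=2$, is only asserted in your sketch. If you want a self-contained argument you should flesh this step out; otherwise you may simply cite the lemma as the paper does.
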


The exactness of the sequence can be proved by using the following:
\begin{lem}[{\cite[Proposition 1.2]{NST09b}}] We have
$$\omega_{i_1,\cdots,i_n}=\omega_{i_1,\cdots,i_{n-1}}x_n^{i_n}+ \text{terms 
$\omega_{j_1,\cdots,j_{n-1}}x_n^{j}$ with $j>j_n$.}$$
\end{lem}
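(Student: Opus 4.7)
The plan is to isolate the dependence of $\omega_{i_1,\ldots,i_n}$ on the variable $x_n$ by using the classical recursion for the top Dickson invariant. Since $\omega_1,\ldots,\omega_{n-1}$ are polynomials in $x_1,\ldots,x_{n-1}$ only, the entire $x_n$-dependence of
$$\omega_{i_1,\ldots,i_n}=\omega_1^{i_1-2i_2}\cdots\omega_{n-1}^{i_{n-1}-2i_n}\omega_n^{i_n}$$
is concentrated in the factor $\omega_n^{i_n}$, so everything reduces to analysing $\omega_n^{i_n}$ as a polynomial in $x_n$ with coefficients in $\F_2[x_1,\ldots,x_{n-1}]$.

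The first step is the factorisation $\omega_n=\omega_{n-1}\,v_n$, where
$$v_n=\prod_{y\in\F_2\langle x_1,\ldots,x_{n-1}\rangle}(x_n+y),$$
obtained by splitting the product defining $\omega_n$ according to whether the coefficient of $x_n$ vanishes. Peeling off the factor $y=0$ yields $v_n=x_n\cdot\prod_{y\neq 0}(x_n+y)$, and the constant term (in $x_n$) of the remaining product is $\prod_{y\neq 0}y=\omega_{n-1}$. Hence
$$v_n=\omega_{n-1}\,x_n+R_n,\qquad R_n\in x_n^{2}\,\F_2[x_1,\ldots,x_{n-1}][x_n],$$
and consequently
$$v_n^{i_n}=\omega_{n-1}^{i_n}\,x_n^{i_n}+(\text{terms of strictly higher degree in }x_n).$$

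Substituting $\omega_n^{i_n}=\omega_{n-1}^{i_n}v_n^{i_n}$ into the monomial expression for $\omega_{i_1,\ldots,i_n}$ and regrouping the resulting powers of $\omega_{n-1}$ gives
$$\omega_{i_1,\ldots,i_n}=\omega_1^{i_1-2i_2}\cdots\omega_{n-2}^{i_{n-2}-2i_{n-1}}\omega_{n-1}^{i_{n-1}-i_n}\,v_n^{i_n}.$$
The lowest-order term in $x_n$ is then
$$\omega_1^{i_1-2i_2}\cdots\omega_{n-2}^{i_{n-2}-2i_{n-1}}\omega_{n-1}^{i_{n-1}}\,x_n^{i_n}=\omega_{i_1,\ldots,i_{n-1}}\,x_n^{i_n},$$
which is the asserted leading term. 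All remaining contributions carry a strictly higher power of $x_n$; since $v_n$ is $GL_{n-1}$-invariant (the $y$'s being permuted by $GL_{n-1}$), each such coefficient lies in the Dickson algebra $\F_2[\omega_1,\ldots,\omega_{n-1}]$, and expanding in the natural $\omega$-monomial basis rewrites it as a sum of terms $\omega_{j_1,\ldots,j_{n-1}}$.

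The only non-routine input is the identification of the constant term (in $x_n$) of $\prod_{y\neq 0}(x_n+y)$ with $\omega_{n-1}$; this is the main, but still elementary, obstacle. Once it is in hand, raising to the $i_n$-th power and decomposing the higher coefficients in the Dickson basis is purely formal.
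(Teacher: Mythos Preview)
The paper does not supply its own proof of this lemma (it is quoted from \cite{NST09b}), so there is nothing to compare your approach against; but your argument has a real gap at the final step.

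Your identification of the lowest $x_n$-term is correct: the recursion $\omega_n=\omega_{n-1}v_n$ with $v_n=\prod_{y\in\langle x_1,\ldots,x_{n-1}\rangle}(x_n+y)$ and the observation that the lowest term of $v_n$ in $x_n$ is $\omega_{n-1}x_n$ are exactly right, and they yield the claimed leading term $\omega_{i_1,\ldots,i_{n-1}}x_n^{i_n}$ for the raw monomial $\omega_1^{i_1-2i_2}\cdots\omega_n^{i_n}$.

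The problem is the sentence ``each such coefficient lies in the Dickson algebra $\F_2[\omega_1,\ldots,\omega_{n-1}]$, and expanding in the natural $\omega$-monomial basis rewrites it as a sum of terms $\omega_{j_1,\ldots,j_{n-1}}$''. Two different rings are being conflated here. The $GL_{n-1}$-invariance of $v_n$ tells you that its $x_n$-coefficients lie in the genuine Dickson algebra $\F_2[c_{n-1,0},\ldots,c_{n-1,n-2}]$; this is \emph{not} the subalgebra $\F_2[\omega_1,\ldots,\omega_{n-1}]$ generated by the top Dickson classes along the standard flag. Only $c_{n-1,0}=\omega_{n-1}$ belongs to both. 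For instance, when $n=3$ one has $v_3=x_3^{4}+c_{2,1}x_3^{2}+\omega_2x_3$ with $c_{2,1}=x_1^{2}+x_1x_2+x_2^{2}\notin\F_2[\omega_1,\omega_2]$, and multiplying by the prefix $\omega_1^{i_1-2i_2}\omega_2^{i_2-1}$ does not in general produce a combination of admissible monomials $\omega_{j_1,j_2}$ (those with $j_1>2j_2$). So the passage from ``$GL_{n-1}$-invariant coefficients'' to ``sum of $\omega_{j_1,\ldots,j_{n-1}}$'' is not justified.

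There is also a framing issue: in the paper the symbol $\omega_{i_1,\ldots,i_n}$ denotes the basis element $\Stein_n(\omega_1^{i_1-2i_2}\cdots\omega_n^{i_n})$ of $M_{n,k}$, not the bare Dickson monomial. Since $\Stein_n$ involves the full symmetric group $\Sigma_n$ permuting all variables, it does not respect the $x_n$-grading, and your expansion of the raw monomial does not automatically transfer. A correct argument has to track how $\Stein_n$ interacts with the decomposition $\F_2[x_1,\ldots,x_n]\cong\F_2[x_1,\ldots,x_{n-1}]\otimes\F_2[x_n]$ (via the parabolic containing $GL_{n-1}$), so that the $x_n$-coefficients of $\omega_{i_1,\ldots,i_n}$ are seen to lie in $M_{n-1}=\Stein_{n-1}\F_2[x_1,\ldots,x_{n-1}]$.
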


Note also that a minimal generating set for the $\A$-module $M_{n,k}$ was constructed in \cite{HaiJA13}, generating the work of Inoue \cite{Ino02}.

\section{Existence of the cofibrations}

A spectrum $X$ is said to be of finite type if its mod $2$ cohomology, $H^*X$, is finite-dimensional in each degree. Recall that given a sequence $X\to Y\to Z$ of spectra of finite type, if the composite $X\to Z$ is homotopically trivial and the induced sequence $0\to H^*Z\to H^*Y\to H^*X\to 0$ is a short exact sequence of $\A$-modules , then $X\to Y\to Z$ is a cofibration.

We wish to realise the algebraic short sequence $$0\to M_{n,k+1}\xrightarrow{\alpha} M_{n,k}\xrightarrow{\beta} \Sigma^k M_{n-1,2k+1}\to 0.$$
 by a cofibration of spectra
 $$\Sigma^kM(n-1)_{2k+1}\to M(n)_k\to M(n)_{k+1}.$$

The inclusion of $k\reg_n$ into $(k+1)\reg_n$ induces a natural map of spectra
$$i:M(n)_k\to M(n)_{k+1}.$$
It is clear that this map realises the inclusion of $\A$-modules $\alpha:M_{n,k+1}\rightarrow M_{n,k}$. 
We wish now to realise the $\A$-linear map $\beta:M_{n,k}\rightarrow \Sigma^k M_{n-1,2k+1}$ by a map of spectra
$$j:\Sigma^k M(n-1)_{2k+1}\to M(n)_k$$
such that the composite $i\circ j$ is homotopically trivial. 
The existence of such a map is an immediate consequence of the following result.
\begin{thm}\label{main}
For all $k\ge 0$, we have
\begin{enumerate}
\item The natural map 
$[\Sigma^k M(n-1)_{2k+1}, M(n)_k]\to \Hom_\A(M_{n,k}, \Sigma^k M_{n-1,2k+1})$
is onto. 
\item The group $[\Sigma^{k}M(n-1)_{2k+1},M(n)_{k+1}]$ is trivial.
\end{enumerate}
\end{thm}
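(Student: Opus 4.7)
The plan is to recast both statements as vanishing questions for $\Ext_{\A}^{*,*}$ via the mod $2$ Adams spectral sequence, and then to address those algebraic questions using Lannes' $T$-functor.

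For part (1) consider the Adams spectral sequence
$$E_2^{s,t}=\Ext^{s,t}_{\A}(M_{n,k}, M_{n-1,2k+1}) \Longrightarrow [M(n-1)_{2k+1}, M(n)_k]_{t-s};$$
since both spectra are bounded below and of finite type, it is strongly convergent. The natural map of the theorem is the edge homomorphism on the line $s=0$ in the stem $t-s=k$, and lands in $\Hom_{\A}(M_{n,k}, \Sigma^k M_{n-1,2k+1})=E_2^{0,k}$. As $E_\infty^{0,k}$ is the intersection of the kernels of the outgoing differentials $d_r\colon E_r^{0,k}\to E_r^{r,k+r-1}$, surjectivity onto $E_2^{0,k}$ follows from
$$\Ext^{r, k+r-1}_{\A}(M_{n,k}, M_{n-1,2k+1}) = 0 \quad (r\geq 2). \qquad (\star)$$
Running the analogous spectral sequence for the pair $(M(n-1)_{2k+1}, M(n)_{k+1})$, the triviality in part (2) reduces to the vanishing of the entire stem $t-s=k$ at $E_2$:
$$\Ext^{s, s+k}_{\A}(M_{n,k+1}, M_{n-1,2k+1}) = 0 \quad (s\geq 0). \qquad (\star\star)$$

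The heart of the proof is now the algebraic vanishings $(\star)$ and $(\star\star)$. For this we invoke the calculation of Lannes' $T$-functor on the Steinberg unstable modules $M_{n,*}$ developed in \S\ref{vanish}. The key mechanism is the adjunction
$$\Hom_{\U}(TM, N) \cong \Hom_{\U}(M, N\otimes H^*V_1),$$
together with its derived version: using $H^*V_n\cong H^*V_{n-1}\otimes H^*V_1$, this converts $\Ext$-groups in which $M_{n-1,2k+1}$ appears, regarded as a subobject of a module built from $H^*V_{n-1}$, into $\Ext$-groups at rank $n-1$ after applying $T$. Combined with the expected decomposition of $TM_{n,k}$ into Steinberg summands of $M_{n-1,*}$-type established in \S\ref{vanish}, this sets up an induction on $n$ that reduces both $(\star)$ and $(\star\star)$ to base cases at small rank, where both vanishings can be disposed of by a direct degree argument.

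The main obstacle is the explicit computation of $TM_{n,k}$ and the identification of the summand matching $M_{n-1,2k+1}$ up to the correct suspension shift by $k$, together with the comparison between $\Ext_{\U}$ (where Lannes' adjunction lives) and $\Ext_{\A}$ (in which the Adams spectral sequence is formulated); one typically checks that the relevant bi-degrees lie in a range where the two agree, which is the content of \S\ref{vanish}. Once these ingredients are in place, the inductive argument outlined above delivers $(\star)$ and $(\star\star)$, and Theorem \ref{main} then follows by the Adams spectral sequence reduction.
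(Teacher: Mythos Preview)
Your reduction via the Adams spectral sequence is correct and matches the paper: surjectivity in (1) follows once the targets $E_2^{r,k+r-1}$ of the differentials out of $E_r^{0,k}$ vanish for $r\ge 2$, and (2) follows from vanishing of the entire column $E_2^{s,s+k}$. The paper in fact proves the slightly stronger vanishing $\Ext^s_\A(M_{n,k},\Sigma^{k+t}M_{n-1,2k+1})=0$ for all $t-s<0$, but your weaker condition $(\star)$ already suffices for (1).

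The gap is in how you propose to prove $(\star)$ and $(\star\star)$. Your mechanism---use the $T$-adjunction to pull $M_{n-1,2k+1}$, viewed inside $H^*V_{n-1}=H^*V_{n-2}\otimes H$, across to the first variable, decompose $TM_{n,k}$, and induct on $n$---does not work as stated. First, $M_{n-1,2k+1}$ is not of the form $N\otimes H$: it is a summand of $\omega_{n-1}^{2k+1}H^*BV_{n-1}$, and the Thom class $\omega_{n-1}$ mixes all the variables, so the $T$-adjunction does not apply to it directly. Second, \S\ref{vanish} never computes $TM_{n,k}$ as an unstable module for $k>1$; Corollary~\ref{connex} only identifies its \emph{connectivity}, via an isomorphism of graded vector spaces. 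Third, the comparison between $\Ext_\U$ and $\Ext_\A$ is not a range in which they agree but rather the Grothendieck spectral sequence for the destabilisation functor, and the derived functors $\D_j$ are nontrivial here.

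The paper's route is structurally different. The induction is on $m$, the rank of the \emph{second} argument $M_{m,j}$ (Proposition~\ref{base2}): the cofiltration of $M_{m,j}$ by the $M_{m,j'}$, $j'<j$, with subquotients $\Sigma^{j'}M_{m-1,2j'-1}$, reduces to the case $j=0$. That base case (Proposition~\ref{base}) is handled via the Grothendieck spectral sequence relating $\Ext_\A$ to $\Ext_\U$ through the derived destabilisation functors $\D_j$; the Lannes--Zarati embedding $\D_j\Sigma^q M_{n,k}\subset \Sigma^{j+q}H^{\otimes j}\otimes M_{n,k}$ together with a Brown--Gitler $\U$-injective resolution of $\Sigma^\ell M_m$ reduce everything to a connectivity bound for $T^m(M_{n,k})$, which is the one place the $T$-functor genuinely enters. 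The proof of Theorem~\ref{main} is then the numerical verification that $F(k+t-s,2k+1,q)<|M_{q+1,k}|$ and $F(k,2k+1,q)<|M_{q+1,k+1}|$ for all $q\ge 0$.
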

 
The theorem is proved by using the Adams spectral sequence 
$$\Ext_\A^{s}(H^*Y,\Sigma^t H^*X)\Longrightarrow [\Sigma^{t-s}X,Y].$$
For the first part, it suffices to prove that
\begin{equation}
\Ext^s_\A(M_{n,k}, \Sigma^{k+t} M_{n-1,2k+1})=0\quad  \text{for $s\ge 0$ and $t-s<0$,} 
\end{equation} 
so that the non-trivial elements in $\Hom_\A(M_{n,k}, \Sigma^k M_{n-1,2k+1})$ are permanent cycles. For the second part, it suffices to prove that 
\begin{equation}
\Ext^s_\A(M_{n,k+1}, \Sigma^{k+s} M_{n-1,2k+1})=0, \quad  \text{for $s\ge 0$}.
\end{equation}
Here and below, $\A$-linear maps are of degree zero, and so $\Ext_\A^{s}(M,\Sigma^tN)$ is the same as the group denoted by $\Ext_\A^{s,t}(M,N)$ in the traditional notation.

The vanishing of the above extension groups will be proved in the next section.

\section{On the vanishing of $\Ext_\A^{s}(M_{n,k},\Sigma^{i+s} M_{m,j})$} \label{vanish}

In this section, we establish a sufficient condition for the vanishing of the extension groups $\Ext_\A^{s}(M_{n,k},\Sigma^{i+s} M_{m,j})$. Note that we always consider the modules $M_{n,k}$ with $k\ge 0$.

Below we consider seperately two cases for the vanishing of the groups $\Ext_\A^{s}(M_{n,k},\Sigma^{i+s} M_{m,j})$:
Proposition \ref{base} gives a condition for the case $j=0$ and Proposition \ref{base2} gives a condition for the case $j>0$.

\begin{prop}\label{base}
Suppose $n>m\ge 0$ and $-\infty<i<|M_{n-m,k}|$. Then
$$\Ext_\A^s(M_{n,k},\Sigma^{i+s}M_{m})=0,\quad s\ge 0.$$
\end{prop}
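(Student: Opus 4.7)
The plan is to use Lannes' $\T$-functor to reduce the given $\Ext$-group to one with $\F_2$ coefficients, where a connectivity vanishing will then apply.

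Since the Steinberg idempotent realises $M_m$ as a direct summand of $H^*BV_m$, the group $\Ext_\A^s(M_{n,k},\Sigma^{i+s}M_m)$ is a direct summand of $\Ext_\A^s(M_{n,k},\Sigma^{i+s}H^*BV_m)$, and it suffices to bound the latter. Writing $\Sigma^{i+s}H^*BV_m\cong H^*BV_m\otimes \Sigma^{i+s}\F_2$ and invoking the derived form of Lannes' adjunction in the category $\U$ of unstable modules—valid because $H^*BV_m$ is injective in $\U$, tensoring with it is exact, and $\T_{V_m}$ preserves projectives—the problem transforms into controlling
$$\Ext_\U^{s,i+s}(\T_{V_m}M_{n,k},\F_2).$$
For the bounded-below reduced unstable modules at hand, the natural map $\Ext_\U\to\Ext_\A$ provides the comparison needed to transfer back to the Steenrod-algebra $\Ext$ at the end.

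The crucial step is bounding the connectivity of $\T_{V_m}M_{n,k}$. Using the classical decomposition
$$\T_{V_m}(H^*BV_n)\cong\bigoplus_{f\in\Hom(V_m,V_n)}H^*BV_n$$
and analysing the action of the Steinberg idempotent $\Stein_n$ summand by summand—exploiting its $GL_n$-equivariance together with the $GL_n$-orbit structure on $\Hom(V_m,V_n)$—one shows that $\T_{V_m}M_{n,k}$ decomposes into shifts of Steinberg-type modules $M_{n',k}$ for $n'\le n-m$, the minimal-degree summand arising from the injections $V_m\hookrightarrow V_n$ and being (a shift of) $M_{n-m,k}$. In particular $\T_{V_m}M_{n,k}$ has connectivity at least $|M_{n-m,k}|$.

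To conclude, for any unstable module $N$ of connectivity $c$, the minimal free resolution of $N$ in $\U$ has its $s$-th term supported in internal degrees $\ge c+s$, since minimality combined with the unstability condition forces the kernel at each stage to gain at least one degree of connectivity. This yields $\Ext_\U^{s,t}(N,\F_2)=0$ for $t-s<c$; substituting $c=|M_{n-m,k}|$ and $t=i+s$ gives the desired vanishing.

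The main obstacle is the explicit analysis of $\T_{V_m}M_{n,k}$. While the decomposition of $\T_{V_m}(H^*BV_n)$ is classical, sorting out the Steinberg-cut pieces and verifying that \emph{no} summand has connectivity below $|M_{n-m,k}|$ requires a careful representation-theoretic calculation. A secondary technical point is the passage between $\Ext_\A$ and $\Ext_\U$, which must be handled via the bounded-below hypothesis and the structure of the relevant minimal resolutions.
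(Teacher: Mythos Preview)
Your overall strategy---reduce to $H^*BV_m$ coefficients, apply Lannes' adjunction, and invoke the connectivity of $\T_{V_m}M_{n,k}$---is close in spirit to the paper's argument, and your connectivity bound $|\T_{V_m}M_{n,k}|\ge |M_{n-m,k}|$ is exactly the paper's Corollary~\ref{connex}. But the argument has a genuine gap at the $\Ext_\U$/$\Ext_\A$ passage.

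Lannes' adjunction and your vanishing line both live in $\U$, so what you actually prove is that $\Ext_\U^s(M_{n,k},\Sigma^{i+s}H^*BV_m)=0$. The natural map $\Ext_\U\to\Ext_\A$ goes the wrong way for your purposes: vanishing of the source says nothing about the target. Concretely, $\F_2=F(0)$ is projective in $\U$, so $\Ext_\U^s(\F_2,\F_2)=0$ for $s>0$, whereas $\Ext_\A^s(\F_2,\F_2)$ is the full Adams $E_2$-page. Your sentence ``the natural map $\Ext_\U\to\Ext_\A$ provides the comparison needed'' therefore does not do any work, and the argument as written does not establish the $\Ext_\A$ vanishing that the Proposition asserts. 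A secondary issue is that for $i+s<0$ the module $\Sigma^{i+s}\F_2$ is not even unstable, so the adjunction step is undefined without a preliminary suspension.

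The paper confronts exactly this obstruction. It first suspends by a large $q$ to make all degrees positive, then uses the Grothendieck spectral sequence
\[
\Ext_\U^{\,s-j}(\D_j\Sigma^qM_{n,k},\,\Sigma^{i+s+q}M_m)\ \Longrightarrow\ \Ext_\A^{\,s}(\Sigma^qM_{n,k},\,\Sigma^{i+s+q}M_m)
\]
coming from the destabilisation adjunction, so that the $\Ext_\A$ group is filtered by honest $\Ext_\U$ groups. The Lannes--Zarati identification $\D_j\Sigma^qM_{n,k}\subset\Sigma^{j+q}H^{\otimes j}\otimes M_{n,k}$ via the Singer functor then supplies the extra $j$ degrees of connectivity needed to kill every term, and the $\T$-functor connectivity bound enters at the final step. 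If you want to salvage your approach, this Grothendieck/Lannes--Zarati layer is precisely what must replace the unjustified $\Ext_\U\to\Ext_\A$ comparison.
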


Here $|M|$ denotes the connectivity of $M$, i.e. the minimal degree in which $M$ is non-trivial. 

To consider the case $j>0$, put $\varphi(j)=2j-1$ and $$F(i,j,q)=i+j+\varphi(j)+\varphi^2(j)+\cdots +\varphi^{q-1}(j),$$ where  $\varphi^t$ is the $t$-fold composition of $\varphi$. Explicitly, 
$$F(i,j,q)=i+(j-1)(2^q-1)+q.$$ 
Note that 
$F(i+j,2j-1,q)=F(i,j,q+1)$ and $F(i,j',q)\le F(i,j,q)$ if $j'\le j$.

\begin{prop}\label{base2}
Suppose $n>m\ge 0$, $j>0$ and $F(i,j,q)<|M_{n-m+q,k}|$ for $0\le q\le m$. Then
$$\Ext_\A^{s}(M_{n,k},\Sigma^{i+s} M_{m,j})=0,\quad s\ge 0.$$
\end{prop}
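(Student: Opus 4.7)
The plan is to prove the proposition by a double induction on $(m,j)$, ordered lexicographically with $m$ outermost, feeding the short exact sequence of the preceding theorem into the long exact $\Ext$-sequence. The numerical function $F(i,j,q)$ is tailor-made to transfer the inductive hypothesis through the two reductions this short exact sequence produces: the identity $F(i+j,2j-1,q)=F(i,j,q+1)$ matches the transition $(m,j)\rightsquigarrow(m-1,2j-1)$ coming from the quotient term, while the monotonicity $F(i,j',q)\le F(i,j,q)$ for $j'\le j$ handles the transition $j\rightsquigarrow j-1$ coming from the subobject term.

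For the base case $m=0$, one has $V_0=0$ and so $M_{0,j}\cong\F_2$ is concentrated in degree $0$. Hence
$$\Ext_\A^s(M_{n,k},\Sigma^{i+s}M_{0,j})=\Ext_\A^{s,i+s}(M_{n,k},\F_2).$$
A minimal free resolution $P_\bullet\to M_{n,k}$ over the connected graded algebra $\A$ has each $P_s$ generated in internal degrees $\ge |M_{n,k}|+s$ (the differential factors through the augmentation ideal, which lies in positive degrees), so this group vanishes as soon as $i+s<|M_{n,k}|+s$, i.e., $i<|M_{n,k}|$. This is exactly the $q=0$ case of the hypothesis.

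For the inductive step with $m\ge 1$ and $j\ge 1$, I would apply the theorem of Section 2 with $(n,k)$ replaced by $(m,j-1)$ to get
$$0\to M_{m,j}\to M_{m,j-1}\to\Sigma^{j-1}M_{m-1,2j-1}\to 0,$$
and take the long exact sequence in $\Ext_\A^*(M_{n,k},\Sigma^{i+s}(-))$. The group $\Ext_\A^s(M_{n,k},\Sigma^{i+s}M_{m,j})$ is then sandwiched between
$$\Ext_\A^{s-1}(M_{n,k},\Sigma^{(i+j)+(s-1)}M_{m-1,2j-1})\quad\text{and}\quad \Ext_\A^s(M_{n,k},\Sigma^{i+s}M_{m,j-1}).$$
The left group vanishes by the outer inductive hypothesis at $(i+j,2j-1,m-1)$: its required conditions $F(i+j,2j-1,q)<|M_{n-m+1+q,k}|$ for $0\le q\le m-1$ are, through $F(i+j,2j-1,q)=F(i,j,q+1)$, the cases $q'=1,\dots,m$ of the given hypothesis. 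For the right group, if $j=1$ it becomes $\Ext_\A^s(M_{n,k},\Sigma^{i+s}M_m)$ and vanishes by Proposition \ref{base} using the $q=0$ condition $i<|M_{n-m,k}|$; if $j\ge 2$ it vanishes by the inner inductive hypothesis at $(i,j-1,m)$, whose required inequalities follow from the monotonicity of $F$ in $j$.

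The chief obstacle here is organising the double induction and checking that the rather intricate hypothesis on $F(i,j,q)$ is preserved under both reductions, but the function $F$ has been engineered precisely so that this propagation is automatic. The real conceptual weight of Theorem \ref{main} actually rests in Proposition \ref{base}, where Lannes' $T$-functor applied to the Steinberg modules enters to produce the seed vanishing statement.
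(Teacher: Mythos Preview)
Your argument is correct and follows essentially the same route as the paper: both proceed by induction on $m$, using the short exact sequence $0\to M_{m,j}\to M_{m,j-1}\to\Sigma^{j-1}M_{m-1,2j-1}\to 0$ together with the identity $F(i+j,2j-1,q)=F(i,j,q+1)$ and the monotonicity of $F$ in $j$, reducing ultimately to Proposition~\ref{base}. The only cosmetic differences are that the paper unwinds your inner induction on $j$ in one stroke via the full cofiltration $M_{m,j}\hookrightarrow M_{m,j-1}\hookrightarrow\cdots\hookrightarrow M_m$, and that for the base case $m=0$ the paper simply invokes Proposition~\ref{base} (with $M_0=\F_2$) whereas you supply a direct minimal-resolution argument giving the same bound $i<|M_{n,k}|$.
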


Recall that Lannes' $\T$-functor is left adjoint to 
the tensoring with $H:=H^*B\Z/2$ in the category $\U$ of unstable modules over the Steenrod algebra \cite{Lan92}.
We need the following result, observed by Harris and Shank \cite{Harris-Shank-92}, to prove Proposition \ref{base}. 

\begin{prop}[{Carlisle-Kuhn \cite[6.1]{CK96} combined with Harris-Shank \cite[4.19]{Harris-Shank-92}}] There is an isomorphism of unstable modules
$$\T(L_n)\cong L_n\oplus (H\otimes L_{n-1}).$$
Here $L_n=M_{n,1}$. 
\end{prop}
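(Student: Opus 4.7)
The plan is to deduce the statement from the analogous computation for the Mitchell--Priddy summand $M(n) = M(n)_0$ due to Carlisle--Kuhn, combined with the Mitchell--Priddy splitting $M(n)\simeq L(n)\vee L(n-1)$ and the exactness and additivity of $\T$.

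First I would record the two ingredients. The splitting yields $H^*M(n)\cong L_n\oplus L_{n-1}$ as unstable modules. Carlisle--Kuhn's Proposition~6.1 asserts an isomorphism
$$\T(H^*M(n))\cong H^*M(n)\oplus (H\otimes H^*M(n-1)).$$
Substituting the Mitchell--Priddy splitting on both sides and exploiting that $\T$ preserves direct sums, this becomes
$$\T(L_n)\oplus \T(L_{n-1})\cong L_n\oplus L_{n-1}\oplus (H\otimes L_{n-1})\oplus (H\otimes L_{n-2}).$$

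Next I would argue by induction on $n$. The base case is $n=1$: applying $\T$ to the short exact sequence $0\to L_1\to H\to \F_2\to 0$ and using the elementary formulas $\T(H)\cong H\oplus H$ (Lannes) and $\T(\F_2)\cong \F_2$, one identifies $\T(L_1)$ with the kernel $\bar H\oplus H = L_1\oplus H$, which matches $L_1\oplus (H\otimes L_0)$ since $L_0=\F_2$. For the inductive step, assume $\T(L_{n-1})\cong L_{n-1}\oplus (H\otimes L_{n-2})$; substituting into the displayed isomorphism and cancelling the common summands $L_{n-1}$ and $H\otimes L_{n-2}$ gives exactly $\T(L_n)\cong L_n\oplus (H\otimes L_{n-1})$.

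The main obstacle is justifying the cancellation step, which requires a Krull--Schmidt property in the relevant subcategory of $\U$. Since all modules in sight are direct summands of $H$-tensored polynomial algebras, they are finitely generated and of finite type; moreover $L_k$ is indecomposable (as the Steinberg summand of $M(k)$), and $H\otimes L_k$ has local endomorphism ring (verifiable by direct $\Hom_\A$ computations, or alternatively by the adjunction $\Hom_\A(H\otimes L_k,H\otimes L_k)=\Hom_\A(\T(H\otimes L_k),L_k)$ together with the inductive formula). With these indecomposability statements in hand, standard Krull--Schmidt applies and the cancellation is rigorous; as a sanity check one may separately verify the Poincaré series of both sides match, which confirms no spurious summand could be hidden in the cancellation.
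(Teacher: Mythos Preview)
The paper does not prove this proposition; it simply records the statement with attribution to the two cited references. The route implicit in the citation is representation-theoretic: Harris--Shank show that, under the dictionary between indecomposable $\U$-summands of $H^{\otimes n}$ and simple $\F_2[\GL_m(\F_2)]$-modules, the functor $\T$ corresponds to Harish--Chandra restriction to $\GL_1\times\GL_{n-1}$, and the known behaviour of the Steinberg module under this restriction yields the decomposition directly, with no induction or cancellation needed.

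Your route via the Mitchell--Priddy splitting and inductive cancellation is genuinely different and basically sound, but the justification of the cancellation step needs repair. You do not need $H\otimes L_k$ to be indecomposable, and you have not shown that it is: the adjunction you invoke lives in $\U$, not over $\A$, and computing $\T(H\otimes L_k)$ ``via the inductive formula'' would be circular since that formula is precisely what you are proving. What actually makes cancellation work is simply that the full subcategory of $\U$ consisting of finite direct sums of indecomposable summands of tensor powers of $H$ is Krull--Schmidt (the indecomposables correspond to simple modules over the finite monoid rings $\F_2[\End(V_m)]$, and uniqueness of decomposition follows). Once you have this, cancelling $\T(L_{n-1})\cong L_{n-1}\oplus(H\otimes L_{n-2})$ from both sides is legitimate regardless of whether those pieces further decompose. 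You should also verify that the input formula $\T(M_n)\cong M_n\oplus(H\otimes M_{n-1})$ is genuinely available independently in Carlisle--Kuhn and not itself deduced from the $L_n$ case, lest the argument become circular.
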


\begin{cor}\label{connex}For $n\ge m$, we have
$|\T^m(M_{n,k})| = |M_{n-m,k}|$.
\end{cor}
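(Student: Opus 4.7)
I would argue by induction on $m$, using the proposition as the main tool together with the Mitchell--Priddy splitting $M_{n,0}\cong L_n\oplus L_{n-1}$, the short exact sequence $0\to M_{n,k+1}\to M_{n,k}\to\Sigma^k M_{n-1,2k+1}\to 0$ from Section~2, and the standard facts that Lannes' $\T$ is an exact symmetric monoidal functor commuting with $\Sigma$, with $|\T(H)|=0$ (indeed $\T H\cong H\oplus H$ by Lannes' theorem applied to $B\Z/2$).

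First, iterating the proposition via $\T(A\otimes B)\cong \T A\otimes \T B$ would yield an explicit decomposition $\T^m L_n\cong\bigoplus_{j=0}^{m} P_{m,j}\otimes L_{n-j}$ in which each $P_{m,j}$ is a tensor product of copies of $H$ and $\T H$, hence of connectivity $0$. The minimum connectivity among summands is therefore $|L_{n-m}|$, which establishes the corollary in the case $k=1$. Applying $\T^m$ to the Mitchell--Priddy splitting then handles the case $k=0$, since $\min(|L_{n-m}|,|L_{n-m-1}|)=|L_{n-m-1}|=|M_{n-m,0}|$.

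For $k\ge 2$, I would induct on $k$: applying the exact functor $\T^m$ to the Takayasu sequence produces
$$0\to \T^m M_{n,k}\to \T^m M_{n,k-1}\to \Sigma^{k-1}\T^m M_{n-1,2k-1}\to 0.$$
By the inductive hypothesis together with the connectivity formula $|M_{n,k}|=(k+1)(2^n-1)-n$ that falls out of the basis in the proposition, the middle and right terms both have connectivity precisely $|M_{n-m,k-1}|$. The target equality $|\T^m M_{n,k}|=|M_{n-m,k}|=|M_{n-m,k-1}|+(2^{n-m}-1)$ therefore amounts to showing that the quotient map is a graded isomorphism in the range $[|M_{n-m,k-1}|,\,|M_{n-m,k}|-1]$.

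The main obstacle will be this last verification: knowing equal connectivities is not enough, one needs control over the low-degree piece of the map. My strategy is a Poincar\'e-series comparison --- the Takayasu sequence for the modules $M_{n-m,\bullet}$ themselves exhibits exactly the same pattern of connectivities, so it suffices to check that $\T^m$ preserves the relevant low-degree graded dimensions. This should follow from the decomposition of $\T^m M_{n,k-1}$ and $\T^m M_{n-1,2k-1}$ inherited inductively from the proposition, together with a basis-level identification of how the quotient map $\beta$ interacts with the summand structure produced by iterating $\T L_n\cong L_n\oplus (H\otimes L_{n-1})$.
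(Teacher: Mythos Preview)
Your diagnosis of the obstacle is correct: tracking only the connectivity through the short exact sequence cannot recover $|\T^m M_{n,k}|$ once the middle and right-hand terms share the same bottom degree. The paper resolves this precisely in the direction your final paragraph gestures toward, but by strengthening the inductive statement rather than by analysing $\beta$. One proves that there is an isomorphism of \emph{graded vector spaces}
\[
\T^m(M_{n,k})\;\cong\;\bigoplus_{i=0}^{m}\bigl[H^{\otimes i}\otimes M_{n-i,k}\bigr]^{\oplus a_i},
\]
with the same multiplicities $a_i$ as in your decomposition of $\T^m L_n$; the connectivity claim is then immediate from the summand $i=m$. Because this is only a statement about Poincar\'e series, short exact sequences become additive identities and no ``basis-level identification of how $\beta$ interacts with the summand structure'' is required: once the middle and quotient of $0\to \T^m M_{n,k}\to \T^m M_{n,k-1}\to \Sigma^{k-1}\T^m M_{n-1,2k-1}\to 0$ are known to have the displayed form, the Takayasu sequence for each $M_{n-i,\bullet}$ forces the same form on the kernel.

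There is also a gap in your inductive scheme. An induction on $k$ alone does not furnish the right-hand term, since $2k-1>k$ for $k\ge 2$; you are tacitly assuming the result for $(n-1,2k-1)$ when you assert its connectivity. The clean fix is a double induction: outer on $n$ (so that everything with first index $n-1$ is available for \emph{all} values of the second index), inner on $k$ from the base cases $k=0,1$ that you already handled via Mitchell--Priddy and the iterated $\T L_n$ formula. With the strengthened hypothesis and this corrected scheme the argument goes through without further work.
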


\begin{proof}
By iterating the action of $\T$ on $L_n$, we see that there is an isomorphism of unstable modules
\begin{eqnarray*}
\T^m(L_n)\cong \bigoplus_{i=0}^m [H^{\otimes i}\otimes L_{n-i}]^{\oplus a_{i}},
\end{eqnarray*}
where $a_{i}$ are certain positive integers depending only on $m$.
By using the exactitude of $\T^m$ and the short exact sequences
$$0\to M_{n,k+1}\xrightarrow{\alpha} M_{n,k}\xrightarrow{\beta} \Sigma^k M_{n-1,2k+1}\to 0,$$ it is easy to prove by induction that there is an isomorphism of graded vector spaces
$$\T^m(M_{n,k})\cong \bigoplus_{i=0}^m [H^{\otimes i}\otimes M_{n-i,k}]^{\oplus a_{i}}.$$
The corollary follows.
\end{proof}

\begin{proof}[Proof of Proposition \ref{base}]
Fix $i$, $s$ and take a positive integer $q$ big enough such that 
$i+s+q$ is positive. We have $$\Ext_\A^s(M_{n,k},\Sigma^{i+s}M_{m})=\Ext_\A^s(\Sigma^q M_{n,k},\Sigma^{i+s+q}M_{m}).$$
Using the Grothendieck spectral sequence, we need to prove that
$$\Ext^{s-j}_\U(\D_{j}\Sigma^q M_{n,k}, \Sigma^{i+s+q}M_{m}) =0,\quad 0\le j\le s.$$
Here $\D_j$ is the $j$th-derived functor of the destabilisation functor $$\D:\A\text{-mod} \to \U$$ from the category of $\A$-modules to the category of unstable $\A$-modules \cite{LZ87}. 

As $M_n$ is $\U$-injective, it is easily seen that $\Sigma^{\ell}M_{m}$ has a $\U$-injective resolution $I^\bullet$ where $I^t$ is a direct sum of $M_{m}\otimes J(a)$ with $a\le \ell-t$, where $J(a)$ is the Brown-Gitler module \cite{LZ86}. So we need to prove that, for  $a\le (i+s+q)- (s-j)=i+j+q$,  we have 
$$\Hom_\U (\D_j\Sigma^q M_{n,k}, M_{m}\otimes J(a))=0.$$ 
By Lannes-Zarati \cite{LZ87}, we have $$\D_j\Sigma^q M_{n,k}=\Sigma R_j\Sigma^{j-1+q} M_{n,k}\subset \Sigma^{j+q}H^{\otimes j}\otimes M_{n,k},$$ 
where $R_j$ is the Singer functor. 
It follows that 
$\Hom_\U (\D_j\Sigma^q M_{n,k}, M_{m}\otimes J(a))$ is a quotient of 
$$\Hom_\U (\Sigma^{j+q}H^{\otimes j}\otimes M_{n,k}, M_{m}\otimes J(a))$$
which is in turn a subgroup of 
$$\Hom_\U (\Sigma^{j+q}H^{\otimes j}\otimes M_{n,k}, H^{\otimes m}\otimes J(a))={\big((\T^m(\Sigma^{j+q}H^{\otimes j}\otimes M_{n,k}))^a\big)}^*.$$
This group is trivial because, by Corollary \ref{connex}, we have
 $$|\T^m(\Sigma^{j+q}H_i\otimes M_{n,k})|=|\Sigma^{j+q} M_{n-m,k}|= |M_{n-m,k}|+j+q>i+j+q\ge a.$$ 
The proposition follows.
\end{proof}

\begin{proof}[Proof of Propositiont \ref{base2}]
We prove the proposition by induction on $m\ge 0$. By noting that $M_{0,j}=\Z/2$, the case $m=0$ is a special case of Proposition \ref{base}.

Suppose $m> 0$. 
For simplicity, put 
$E^s(\Sigma^iM_{m,j})=\Ext_\A^{s}(M_{n,k},\Sigma^{i+s} M_{m,j}).$ 
The short exact sequence of $\A$-modules
$M_{m,j}\hookrightarrow M_{m,j-1} \twoheadrightarrow \Sigma^{j-1}M_{m-1,2j-1}$
induces a long exact sequence in cohomology
$$\cdots \to E^{s-1}(\Sigma^{i+j}M_{m-1,2j-1}) \to E^s(\Sigma^i M_{m,j})\to E^s(\Sigma^i M_{m,j-1}) \to \cdots $$
So from the cofiltration of $M_{m,j}$
$$\xymatrix{
\Sigma^iM_{m,j}\ar@{^(->}[r] & \Sigma^i M_{m,j-1}\ar@{^(->}[r]\ar@{->>}[d]&  \cdots \ar@{^(->}[r] &  \Sigma^i M_{m,1}\ar@{^(->}[r]\ar@{->>}[d] &\Sigma^i M_{m}\ar@{->>}[d]\\
 & \Sigma^{i+j-1} M_{m-1,2j-1} 
& & \Sigma^{i+1} M_{m-1,3} & \Sigma^iM_{m-1,1}
}$$
we see that, in order to prove $E^s(\Sigma^i M_{m,j})=0$, it suffices to prove that the groups  $E^{s-1}(\Sigma^{i+j'}M_{m-1,2j'-1})$, $1\le j'\le j$, and $E^s(\Sigma^i M_{m})$, are trivial.

By Proposition \ref{base}, $E^s(\Sigma^i M_{m})$ is trivial since $i=F(i,j,0)<|M_{n,k}|$.
For $1\le j'\le j$ and $0\le q\le m-1$, we have 
$$F(i+j',2j'-1,q)=F(i,j',q+1)\le F(i,j,q+1)< |M_{n-m+1+q,k}|.$$
By inductive hypothesis for $m-1$, we have 
$E^{s-1}(\Sigma^{i+j'}M_{m-1,2j'-1})=0.$ The proposition is proved.
\end{proof}

We are now ready to prove Theorem \ref{main}.
Recall that the connectivity of $M_{n,k}$ 
is given by $$|M_{n,k}|=1+3+\cdots +(2^{n-1}-1)+(2^n-1)k.$$  

\begin{proof}[Proof of Theorem \ref{main} (1)]
Using the Adams spectral sequence, it suffices to prove that$$\Ext^s_\A(M_{n,k}, \Sigma^{k+t} M_{n-1,2k+1})=0\quad  \text{for $s\ge 0$ and $t-s<0$.}$$
For $q\ge 0$, we have 
$$F(k+t-s,2k+1,q)=k+t-s +2k(2^q-1) +q < (2^{q+1}-1)k +q
\le |M_{q+1,k}|.$$
The vanishing of the extension groups follows from
Proposition \ref{base2}.
\end{proof}

\begin{proof}[Proof of Theorem \ref{main} (2)] Using the Adams spectral sequence, it suffices to prove that $$\Ext^s_\A(M_{n,k+1}, \Sigma^{k+s} M_{n-1,2k+1})=0, \quad  \text{for $s\ge 0$}.$$
For $q\ge 0$, we have 
$$F(k,2k+1,q)=k+2k(2^q-1)+t=(2^{q+1}-1)k+q<|M_{q+1,k+1}|.$$
The vanishing of the extension groups follows from Proposition \ref{base2}.
\end{proof}

\end{document}